\address{Department of Mathematics, Institute of Science Tokyo, 2-12-1 Ookayama, Meguro-ku, Tokyo, 152-8551, Japan}
\email{tanaka.a.2255@m.isct.ac.jp}
\theoremstyle{plain}
\newtheorem{thm}{Theorem}[section]
\newtheorem{lem}[thm]{Lemma}
\newtheorem*{thm*}{Theorem}
\newtheorem*{cor*}{Corollary}
\theoremstyle{definition}
\newtheorem{dfn}[thm]{Definition}
\newtheorem{rem}[thm]{Remark}
\newtheorem*{que*}{Question}
\newtheorem*{con*}{Conjecture}
\begin{document}

\title[A Simple Construction of Lefschetz Fibrations]{A Simple Construction of Lefschetz Fibrations on Compact Stein Surfaces}
\author{Atsushi Tanaka}
\date{}

\begin{abstract}

Loi--Piergallini, Akbulut--Ozbagci, and Akbulut--Arikan showed that every compact Stein surface admits a positive allowable Lefschetz fibration over the disk $D^2$ with bounded fibers (PALF in short), and they provided constructions of PALFs corresponding to compact Stein surfaces.

In this paper, we present a simple method for constructing a PALF from a 2-handlebody decomposition of any given compact Stein surface. Our method yields PALFs whose regular fibers have small genus, and it provides an alternative constructive proof of the above result.

We also define the minimal genus of a regular fiber of a PALF on the knot trace of a knot $K$ with framing one less than its maximal Thurston--Bennequin number as an invariant of $K$. When the grid number of $K$ is $N$, our construction produces a PALF whose regular fiber has genus at most $(N - 1)/2$, showing that the defined invariant is bounded above by $(N - 1)/2$.

\end{abstract}

\maketitle

\section{Introduction}\label{sec:intro}

A smooth 4-manifold admitting a handle decomposition consisting of a 0-handle, 1-handles, and 2-handles is called a 4-dimensional 2-handlebody. Eliashberg \cite{MR1044658} and Gompf \cite{MR1668563} showed that a smooth 4-dimensional 2-handlebody admits the structure of a compact Stein surface (a Stein surface in short) if each 2-handle is attached along a Legendrian knot in the boundary of the 1-handlebody, with a framing one less than its Thurston--Bennequin number. Conversely, any Stein surface admits such a handle decomposition.

Among Lefschetz fibrations, those having the 2-disk as their base space and a compact surface with boundary as their regular fiber are called positive allowable Lefschetz fibrations (PALFs in short). Harer \cite{MR2628695} (see also Etnyre and Fuller \cite{MR2219214}) presented a construction method for achiral allowable Lefschetz fibrations (AALFs in short), a broader class containing PALFs: if $X$ is a 4-dimensional handlebody consisting of handles of index at most two, then $X$ admits an AALF. Harer's method for constructing AALFs serves as a key reference for our approach.

Loi and Piergallini \cite{MR1835390} proved the equivalence between Stein surfaces and PALFs. Subsequently, Akbulut and Ozbagci \cite{MR1825664} and Akbulut and Arikan \cite{MR2972525} gave constructive proofs of this equivalence by explicitly constructing PALFs corresponding to given Stein surfaces. To construct a PALF diffeomorphic to a Stein surface given by a handle decomposition, the method of Akbulut and Ozbagci \cite{MR1825664} uses the Seifert surface of the torus knot $T_{N,N+1}$ as the regular fiber of the PALF, assuming the attaching circle of the 2-handles has grid number $N$. That is, the regular fiber is constructed by plumbing on the order of $N^2$ Hopf bands. Consequently, the genus of the regular fiber is roughly on the order of $(N-1)^2/2$. Later, Akbulut and Arikan \cite{MR2972525} succeeded in reducing the genus of this regular fiber. The equivalence between Stein surfaces and PALFs, along with these constructive methods, has profoundly influenced developments in this area.

Ukida \cite{MR3571042, MR4405983, Ukida} developed a method for constructing PALFs via Kirby calculus and applied it to produce genus-zero PALFs for the Akbulut cork and the Akbulut--Yasui plug. A manuscript in preparation by Ukida \cite{Ukida}, which introduces a method to construct small-genus PALFs from handle decompositions of Stein surfaces using Kirby diagrams and moves, has been a significant source of inspiration for our present work.

This paper provides a simple alternative construction of a PALF corresponding to a Stein surface, differing from the approaches of Akbulut and Ozbagci \cite{MR1825664} and Akbulut and Arikan \cite{MR2972525}. As a consequence, relying solely on Kirby calculus, we obtain an alternative constructive proof that every Stein surface admits a PALF. We also define the minimal genus of a regular fiber of a PALF on the knot (or link) trace of $K$ with a framing one less than its maximal Thurston--Bennequin number, as a new invariant of the knot (or link) $K$. When the grid number of $K$ is $N$, the construction presented in this paper yields a PALF by plumbing at most $N-1$ Hopf bands. Consequently, the regular fiber of this PALF has genus at most $(N-1)/2$, and hence our defined invariant is bounded above by $(N-1)/2$.

This paper is organized as follows. Section~\ref{sec:preliminaries} reviews the necessary definitions, basic properties, and background theorems. In Section~\ref{sec:construction}, we state and prove our main theorem, which provides a method for constructing a PALF from a 2-handlebody decomposition of any given Stein surface. Finally, in Section~\ref{sec:genus}, we formally define the minimal genus of a regular fiber of a PALF on the trace of a knot (or link) $K$, equipped with a framing one less than its maximal Thurston--Bennequin number, as a new invariant of $K$. We then prove that if the grid number of $K$ is $N$, this invariant is bounded above by $(N-1)/2$.

\section*{Acknowledgement}
The author would like to express his sincere gratitude to Professor Hisaaki Endo for his invaluable guidance and continuous support. This paper addresses one of the research topics that Professor Endo has consistently pursued with his students in the laboratory. The author is also grateful to Takuya Ukida for showing him a manuscript of the paper \cite{Ukida} in preparation, and to his colleagues at the Institute of Science Tokyo for many helpful discussions and insightful comments. This work was supported by JST SPRING, Japan Grant Number JPMJSP2180.

\section{Preliminaries}\label{sec:preliminaries}

In this section, we review the fundamental definitions and theorems used in this paper, following \cite{MR4327688}, \cite{MR1707327}, and \cite{MR2114165}. 
Throughout this paper, we assume all Stein surfaces to be of complex dimension 2, all Lefschetz fibrations to be 4-dimensional, and all contact manifolds to be 3-dimensional.

\subsection{2-handlebody decompositions of Stein surfaces}\label{sec:preliminaries1}

A plane field $\xi$ on a 3-manifold $Y$ is called a contact structure if there exists a 1-form $\alpha$ on $Y$ such that $\xi = \ker(\alpha)$ and $\alpha \wedge d\alpha \neq 0$ at every point of $Y$. In this case, $\alpha$ is called a contact form, and the pair $(Y, \xi)$ is called a contact manifold. 
A Legendrian knot $L$ in $(Y, \xi)$ is a knot in $Y$ that is everywhere tangent to $\xi$. 
A Legendrian isotopy is a smooth isotopy of knots such that the knot is Legendrian at each stage of the isotopy.

Let $(x, y, z)$ denote the standard coordinates on $\mathbb{R}^3$. The 1-form $\alpha_{st} = dz + x\,dy$ is a contact form on $\mathbb{R}^3$, and the associated plane field $\xi_{st} = \ker(\alpha_{st})$ is called the standard contact structure.

For a Legendrian knot $L$ parametrized by $\gamma(t) = (x(t), y(t), z(t))$ in $(\mathbb{R}^3, \xi_{st})$, its projection onto the $yz$-plane is called the front projection. In a front projection, the $x$-coordinate is recovered from the slope of the projected curve via $x = -dz/dy$. Consequently, at any crossing, the strand with the larger slope lies under the strand with the smaller slope. The front projection also features cusp singularities, which correspond to points where the tangent vector to $L$ is parallel to the $x$-axis. Figure~\ref{fig:W0201} illustrates a front projection of a Legendrian representative of the right-handed trefoil knot $T_{2,3}$.

\begin{figure}[htbp]
\centering
\includegraphics[width=8cm]{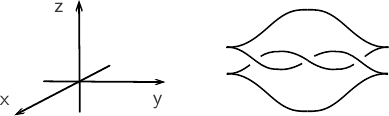} 
\caption{A front projection of a Legendrian right-handed trefoil knot.}
\label{fig:W0201}
\end{figure}

The Thurston--Bennequin number of a Legendrian knot $L$ in $(\mathbb{R}^3, \xi_{st})$ is invariant under Legendrian isotopy and can be computed from a front projection using the formula
\[
tb(L) = wr(L) - \lambda(L),
\]
where $wr(L)$ is the writhe of $L$ and $\lambda(L)$ is the number of left cusps in the projection. The maximal Thurston--Bennequin number $\overline{tb}(K)$ of a topological knot type $K$ is the maximum value of the Thurston--Bennequin number attained by any Legendrian representative of $K$.

A complex manifold is called a Stein manifold if it admits a proper holomorphic embedding into $\mathbb{C}^n$ for some $n$. A Stein surface is defined as a sublevel set of an exhausting strictly plurisubharmonic function on a Stein manifold of complex dimension 2. Stein surfaces became accessible to topological study after they were characterized in terms of handlebody theory by Eliashberg and Gompf.

\begin{thm}[Eliashberg \cite{MR1044658}, Gompf \cite{MR1668563}]\label{thm:EliashbergGompf}
A smooth 4-manifold admitting a handle decomposition consisting of a 0-handle, 1-handles, and 2-handles admits the structure of a Stein surface if the 2-handles are attached to the Stein surface $\natural^\ell (S^1 \times D^3)$ along Legendrian knots in the standard contact manifold $\sharp^\ell (S^1 \times S^2)$, with framings one less than their respective Thurston--Bennequin numbers. Conversely, any Stein surface admits such a handle decomposition.
\end{thm}

Figure~\ref{fig:W0202} shows an example of a Legendrian link diagram in standard form, illustrating the handle decomposition described in Theorem~\ref{thm:EliashbergGompf}.

\begin{figure}[htbp]
\centering
\includegraphics[width=8cm]{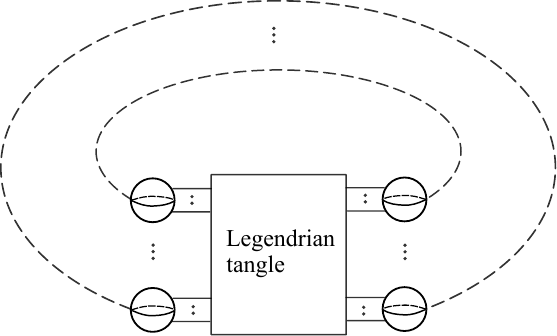}
\caption{A Legendrian link diagram in standard form.}
\label{fig:W0202}
\end{figure}

\subsection{Representation of Legendrian knots using grid diagrams}\label{sec:preliminaries2}

In this subsection, we review the representation of Legendrian knots and links using grid diagrams (cf.\ \cite[Chapter 3]{MR3381987}, \cite{MR2500576}). A grid diagram with grid number $N$ is an $N \times N$ square grid with $N$ $X$'s and $N$ $O$'s placed in distinct squares, such that each row and each column contains exactly one $X$ and one $O$. Any such grid diagram defines an oriented knot or link diagram according to the following standard convention: for each row, draw a horizontal segment from the $O$ to the $X$, and for each column, draw a vertical segment from the $X$ to the $O$. The vertical segments always cross over the horizontal ones.

An example of a grid diagram $G$ is shown in Figure~\ref{fig:W0203}(a). In this paper, the orientation of knots and links represented by grid diagrams is irrelevant. Thus, the $X$ and $O$ markings are omitted, and the vertical and horizontal segments are depicted simply as in Figure~\ref{fig:W0203}(b). We refer to such a diagram as a knot (or link) in grid position. We index the columns from left to right and the rows from bottom to top; for instance, in the $3 \times 3$ grid diagram of Figure~\ref{fig:W0203}, they are referred to as the first, second, and third columns (resp.\ rows).

\begin{figure}[htbp]
\centering  
\begin{tikzpicture}
    \node[anchor=south west, inner sep=0] (image) at (0,0)  {\includegraphics{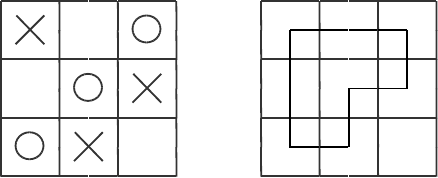}};
    \begin{scope}[x={(image.south east)},y={(image.north west)}]
        \node [below] at (0.2, -0.02) {(a)};
        \node [below] at (0.8, -0.02) {(b)};
    \end{scope}
\end{tikzpicture}
\caption{A grid diagram (a) and a corresponding knot in grid position (b).}
\label{fig:W0203}
\end{figure}

The four types of corners appearing in a knot in grid position are illustrated in Figure~\ref{fig:W0204}.
Figures~\ref{fig:W0204} (a), (b), (c), and (d) show the northwest (NW), northeast (NE), southwest (SW), and southeast (SE) corners, respectively.

\begin{figure}[htbp]
\centering  
\begin{tikzpicture}
    \node[anchor=south west, inner sep=0] (image) at (0,0)  {\includegraphics[width=\textwidth]{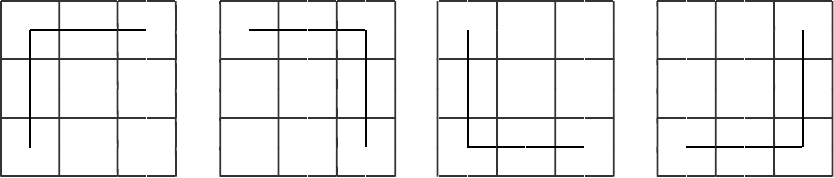} };
    \begin{scope}[x={(image.south east)},y={(image.north west)}]
        \node [below] at (0.11, -0.02) {(a)};
        \node [below] at (0.37, -0.02) {(b)};
        \node [below] at (0.63, -0.02) {(c)};
        \node [below] at (0.89, -0.02) {(d)};
    \end{scope}
\end{tikzpicture}
\caption{The four types of corners appearing in a knot in grid position.}
\label{fig:W0204}
\end{figure}

The procedure for converting a front projection of a Legendrian knot (or link) $\widetilde{C}$ in $(S^3, \xi_{st})$ into a knot in grid position is as follows. First, rotate the front projection of $\widetilde{C}$ by $45^\circ$ clockwise. Then, deform it into a knot $\widetilde{C'}$ in grid position such that the left cusps are mapped to northwest (NW) corners and the right cusps to southeast (SE) corners. Consequently, the number of NW corners of $\widetilde{C'}$ equals the number of left cusps of the original Legendrian knot $\widetilde{C}$.

As an example, consider the Legendrian representative of the right-handed trefoil knot $T_{2,3}$ shown in Figure~\ref{fig:W0205}(a), which has writhe $3$ and two left cusps. If a 2-handle attached along this Legendrian knot is to yield a Stein surface, Theorem~\ref{thm:EliashbergGompf} dictates that the framing must be $0$ (since $tb(\widetilde{C}) - 1 = (3 - 2) - 1 = 0$). Figure~\ref{fig:W0205}(b) shows the corresponding knot $\widetilde{C'}$ in grid position. The two left cusps, highlighted by pale red circles in Figure~\ref{fig:W0205}(a), naturally correspond to the two NW corners marked similarly in Figure~\ref{fig:W0205}(b).

Next, to ensure that all vertical segments lifted over 1-handles have northwest (NW) corners as their upper endpoints, we perform the following two operations, (i) and (ii). It is known that under these operations, the resulting Legendrian knot is Legendrian isotopic to the original one (cf.\ \cite{MR2500576}).

\begin{itemize}
\item[(i)] \textbf{SW stabilization at an NE corner.} The writhe of the knot increases by one under this stabilization. Since the number of NW corners (which correspond to left cusps) also increases by one, the Thurston--Bennequin number remains unchanged. As an example, Figure~\ref{fig:W0206}(a) shows the result of applying SW stabilizations to the NE corners in the third and fourth columns of the knot $\widetilde{C'_0}$ in grid position shown in  Figure~\ref{fig:W0205}(b).

\item[(ii)] \textbf{Horizontal commutation.} The vertical segments of length one introduced by the SW stabilizations do not cross any horizontal segments; thus, they do not need to be lifted over 1-handles. We gather these short segments in the rightmost region of the grid diagram. Continuing with our example, Figure~\ref{fig:W0206}(b) illustrates the result of performing a horizontal commutation between the vertical segments in the fourth and fifth columns of Figure~\ref{fig:W0206}(a).
\end{itemize}

Through the procedure described above, we convert the knot $\widetilde{C'_0}$ in grid position into a new knot $\widetilde{C''_0}$ in grid position. 

In our running example, the knot $\widetilde{C''_0}$ in grid position has a writhe of $5$ and four left cusps. Its Thurston--Bennequin number and framing remain $1$ and $0$, respectively, which are unchanged from the original knot. 

\begin{figure}[htbp]
\centering  
\begin{tikzpicture}
    \node[anchor=south west, inner sep=0] (image) at (0,0)  {\includegraphics[scale=1.1]{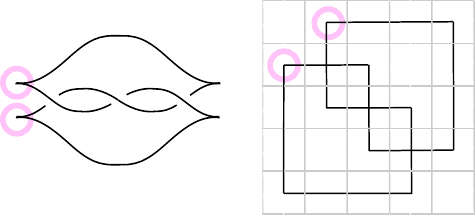} };
    \begin{scope}[x={(image.south east)},y={(image.north west)}]
        \node at (0.12, 0.8) {\small $\widetilde{C_0}$};
        \node at (0.57, 0.5) {\small $\widetilde{C'_0}$};
        \node [below] at (0.25, -0.02) {(a)};
        \node [below] at (0.77, -0.02) {(b)};
    \end{scope}
\end{tikzpicture}
\caption{(a) A front projection of a Legendrian knot. (b) The corresponding knot in grid position.}
\label{fig:W0205}
\end{figure}

\begin{figure}[htbp]
\centering  
\begin{tikzpicture}
    \node[anchor=south west, inner sep=0] (image) at (0,0)  {\includegraphics[scale=0.8]{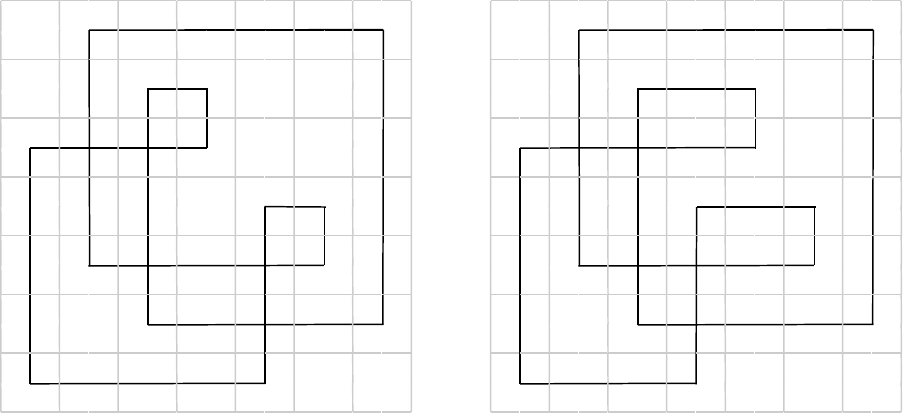}};
    \begin{scope}[x={(image.south east)},y={(image.north west)}]
        \node at (0.55, 0.5) {\small $\widetilde{C''_0}$};
        \node [below] at (0.23, -0.02) {(a)};
        \node [below] at (0.77, -0.02) {(b)};
    \end{scope}
\end{tikzpicture}
\caption{(a) SW stabilization at an NE corner. (b) Horizontal commutation.}
\label{fig:W0206}
\end{figure}

\subsection{Kirby diagrams of PALFs}\label{sec:preliminaries3}

In this subsection, we review the definitions of PALFs and their Kirby diagrams, following \cite[Sections 4.6 and 8.2]{MR1707327}, \cite[Chapter 10]{MR2114165}, and \cite[Sections 2 and 3]{MR4327688}. Let $g \geq 0$ and $b \geq 0$ be integers. We denote by $\Sigma^b_g$ a compact oriented surface of genus $g$ with $b$ boundary components. The mapping class group of $\Sigma^b_g$ is denoted by $\mathrm{MCG}(\Sigma^b_g)$. For a simple closed curve $C$ on $\Sigma^b_g$, we denote by $t_C$ the right-handed Dehn twist along $C$. A PALF is defined as follows.

\begin{dfn}\label{def:PALF}
Let $M$ be a compact, connected, oriented smooth 4-manifold, and let $D^2$ be the standard 2-disk. We refer to $M$ as the total space and $D^2$ as the base space. A surjective smooth map $f \colon M \to D^2$, called the projection, is a \emph{positive allowable Lefschetz fibration (PALF)} if it satisfies the following conditions:
\begin{enumerate}[label=(\roman*)]
\item For every critical point $p \in \mathrm{Int}(M)$ of $f$, there exist local complex coordinates $(z_1, z_2)$ around $p$ and $w$ around $f(p)$ such that $f$ is locally given by $w = f(z_1, z_2) = z_1 z_2$. Here, the orientations determined by $(z_1, z_2)$ and $w$ must be compatible with the given orientations of $M$ and $D^2$, respectively.
\end{enumerate}

By condition (i), each critical point of $f$ is isolated. Since $M$ is compact, the number of critical points of $f$ is finite, and thus the set of critical values $\Delta \subset D^2$ is also finite. The restriction $f|_{f^{-1}(D^2 \setminus \Delta)} \colon f^{-1}(D^2 \setminus \Delta) \to D^2 \setminus \Delta$ is a smooth surface bundle. For each $q \in D^2 \setminus \Delta$, the fiber $F_q = f^{-1}(q)$ is a surface equipped with an orientation compatible with those of $M$ and $D^2$. Because $D^2 \setminus \Delta$ is connected, the diffeomorphism type of $F_q$ is independent of the choice of $q$.

For any $q \in D^2$, we call $F_q$ a regular fiber if $q \notin \Delta$, and a singular fiber if $q \in \Delta$. A singular fiber is obtained from a regular fiber by collapsing a simple closed curve, called a vanishing cycle, to a point. The genus of the regular fiber is referred to as the genus of the PALF.

\begin{enumerate}[label=(\roman*), start=2]
\item The boundary of the regular fiber is non-empty.
\item Every vanishing cycle represents a nontrivial first homology class in the regular fiber.
\end{enumerate}

In this paper, we further assume that the following conditions are satisfied:
\begin{enumerate}[label=(\roman*), start=4]
\item Each singular fiber contains exactly one critical point.
\item No singular fiber contains a sphere of self-intersection $-1$ (i.e., the fibration is relatively minimal).
\end{enumerate}
\end{dfn}

Consider a PALF $f \colon M \to D^2$. Let $\Delta = \{q_1, \ldots, q_n\} \subset \mathrm{Int}(D^2)$ be the set of critical values of $f$, and let $q_0$ be a regular value. Let $\gamma_1, \ldots, \gamma_n \colon [0,1] \to D^2$ be paths from $q_0$ to $q_1, \ldots, q_n$, respectively. The ordered collection of paths $(\gamma_1, \ldots, \gamma_n)$ is called a Hurwitz system for $f$ if it satisfies the following three conditions:
\begin{itemize}
\item Each path $\gamma_i$ is simple (i.e., has no self-intersections).
\item The paths are mutually disjoint except at their common initial point $q_0$; that is, $\gamma_i \cap \gamma_j = \{q_0\}$ for all $i \neq j$.
\item The paths $\gamma_1, \ldots, \gamma_n$ emanate from $q_0$ in counterclockwise cyclic order.
\end{itemize}

\begin{thm}\label{thm:Monodromy}
Let $f \colon M \to D^2$ be a PALF with regular fiber $\Sigma^b_g$. Suppose $f$ admits a Hurwitz system $(\gamma_1, \ldots, \gamma_n)$ with corresponding vanishing cycles $C_1, \ldots, C_n$ in the regular fiber $F_{q_0} \cong \Sigma^b_g$. Then the following hold:
\begin{enumerate}[label=(\roman*)]
\item The total space $M$ is diffeomorphic to the 4-manifold obtained by attaching $n$ 2-handles $h^2_1, \ldots, h^2_n$ to $\Sigma^b_g \times D^2$. The attaching circle of $h^2_k$ is given by $C_k \times \{e^{2\pi i k/n}\} \subset \Sigma^b_g \times \partial D^2$.
\item The framing of each 2-handle $h^2_k$ is $-1$ relative to the surface framing induced by the regular fiber.
\end{enumerate}
\end{thm}

The product of right-handed Dehn twists $t_{C_1} t_{C_2} \cdots t_{C_n}$ defines an element in $\mathrm{MCG}(\Sigma^b_g)$, often referred to as the global monodromy of the PALF. We denote this monodromy factorization by the ordered tuple $(C_1, C_2, \ldots, C_n)$. Here, we adopt the convention that the mapping class group acts on the right; thus, the product is read from left to right, which is the reverse of standard functional composition.

A Kirby diagram is a visual representation of a 4-dimensional handlebody via a link diagram in $\mathbb{R}^3$ (or $S^3$). We now describe the standard procedure for drawing a Kirby diagram of a PALF with regular fiber $\Sigma^b_g$.

The 0-handle of the 4-dimensional handlebody is given by $h^0 \cong D^4$, and the $j$-th 1-handle by $h^1_j \cong D^1 \times D^3$ for $j=1, \ldots, 2g+b-1$. To reflect the product structure of the fibration, we decompose these handles as $h^0 \coloneqq \underline{h^0} \times D^2$ and $h^1_j \coloneqq \underline{h^1_j} \times D^2$, where the corresponding handles forming the 2-dimensional regular fiber $\Sigma^b_g$ are the 2-dimensional 0-handle $\underline{h^0} \cong D^2$ and the $j$-th 1-handle $\underline{h^1_j} \cong D^1 \times D^1$.

\begin{figure}[htbp]
\centering
\begin{tikzpicture}
    \node[anchor=south west, inner sep=0] (image) at (0,0) {\includegraphics[scale=0.8]{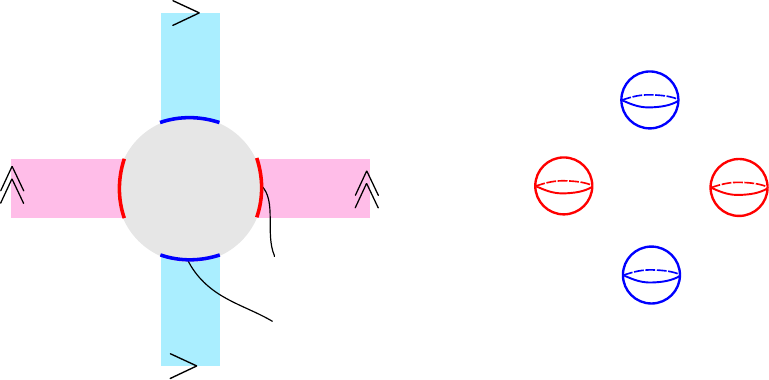}};
    \begin{scope}[x={(image.south east)},y={(image.north west)}]
        \node [below] at (0.25, -0.02) {(a)};
        \node [below] at (0.84, -0.02) {(b)};
        \node at (0.25, 0.5) {\small $\underline{h^0}$};
        \node at (0.25, 0.18) {\small $\underline{h^1_1}$};
        \node at (0.42, 0.5) {\small $\underline{h^1_2}$};
        \node [right] at (0.35, 0.15) {\small attaching region of $\underline{h^1_1}$};
        \node [right] at (0.35, 0.32) {\small attaching region of $\underline{h^1_2}$};
            \end{scope}
\end{tikzpicture}
\caption{(a) The handle decomposition of the regular fiber $T_0 \cong \Sigma^1_1$. (b) A Kirby diagram of $\Sigma^1_1 \times D^2$.}
\label{fig:W0207}
\end{figure}

As an example, we consider $\Sigma^1_1 \times D^2$ and write its handle decomposition as
\[
h \coloneqq \Sigma^1_1 \times D^2 = h^0 \cup h^1_1 \cup h^1_2.
\]
At the center of $h$ lies the 2-dimensional submanifold
\[
\underline{h} \coloneqq T_0 \coloneqq \underline{h^0} \cup \underline{h^1_1} \cup \underline{h^1_2},
\]
which is diffeomorphic to $\Sigma^1_1$. The handle decomposition of the regular fiber $T_0$ is illustrated in Figure~\ref{fig:W0207}(a). The attaching regions of the 1-handles $\underline{h^1_1}$ and $\underline{h^1_2}$ are indicated by blue and red arcs, respectively. The handle decomposition and the attaching regions for the 4-manifold $h = \Sigma^1_1 \times D^2$ are obtained by thickening $\underline{h} = T_0$ in the $D^2$-direction. A Kirby diagram of $\Sigma^1_1 \times D^2$ is shown in Figure~\ref{fig:W0207}(b).

It is worth noting that the structure of the regular fiber of this PALF is equivalent to that obtained by plumbing several Hopf bands to a disk $D^2$. While the specific twisting of the Hopf bands is irrelevant to the diffeomorphism type of the fiber, it shifts the surface framing by $-1$. We will discuss this in Section~\ref{sec:construction}. 
In their work, Akbulut and Ozbagci \cite{MR1825664} constructed PALFs using the Seifert surface of a torus link, which is formed by plumbing Hopf bands. While the construction method presented in this paper also utilizes Hopf bands, it relies entirely on Kirby calculus.

Continuing with our running example, suppose that the PALF admits a Hurwitz system $(\gamma_1, \gamma_2, \gamma_3)$ with corresponding vanishing cycles $C_1, C_2, C_3$, as shown in Figure~\ref{fig:W0208}(a). Consequently, the PALF is described by the monodromy factorization $(C_1, C_2, C_3)$. Let $\overline{C_1}, \overline{C_2}, \overline{C_3}$ denote the attaching circles of the 2-handles corresponding to these vanishing cycles. The resulting Kirby diagram is illustrated in Figure~\ref{fig:W0208}(b). 

By Theorem~\ref{thm:Monodromy}, the framing of each $\overline{C_i}$ is one less than its surface framing. In this drawing convention, the surface framing of each $\overline{C_i}$ coincides with the blackboard framing, which in turn equals its writhe. Consequently, all of the framings are exactly $-1$. Note that in the Kirby diagram, these attaching circles are stacked from bottom to top in the order $\overline{C_1}, \overline{C_2}, \overline{C_3}$, reflecting the cyclic order of the Hurwitz system.

\begin{figure}[htbp]
\centering  
\begin{tikzpicture}
    \node[anchor=south west, inner sep=0] (image) at (0,0)  {\includegraphics[scale=0.8]{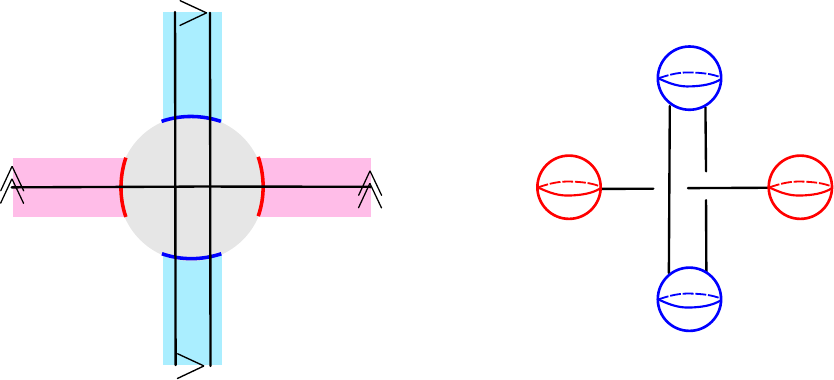} };
    \begin{scope}[x={(image.south east)},y={(image.north west)}]
        \node [below] at (0.23, -0.02) {(a)};
        \node [below] at (0.83, -0.02) {(b)};
        \node at (0.29, 0.8) {\small $C_1$};
        \node at (0.09, 0.46) {\small $C_2$};
        \node at (0.175, 0.8) {\small $C_3$};
        \node at (0.88, 0.675) {\small $\overline{C_1}$};
        \node at (0.88, 0.6) {\small $-1$};
        \node at (0.75, 0.45) {\small $\overline{C_2}$};
        \node at (0.75, 0.54) {\small $-1$};
        \node at (0.78, 0.675) {\small $\overline{C_3}$};
        \node at (0.78, 0.6) {\small $-1$};
    \end{scope}
\end{tikzpicture}
\caption{(a) The handle decomposition of the regular fiber $T_0 \cong \Sigma^1_1$ with vanishing cycles $C_1, C_2, C_3$. (b) A Kirby diagram of the corresponding PALF.}
\label{fig:W0208}
\end{figure}

Summarizing the above discussion, the procedure, denoted by $\Phi$, for constructing a Kirby diagram $KD$ representing a 4-dimensional 2-handlebody diffeomorphic to the total space of a given PALF $P$, using its regular fiber $\Sigma$ and vanishing cycles $C_i$, is formulated as follows:

\begin{itemize}
\item Each vanishing cycle $C_i$ is replaced by the attaching circle $\overline{C_i}$ of a 2-handle. The framing of $\overline{C_i}$ is set to be one less than the surface framing of $C_i$. The vertical ordering (i.e., stacking) of the attaching circles $\overline{C_i}$ from bottom to top is determined by the left-to-right order in the monodromy factorization.

\item The attaching region of each 1-handle in the regular fiber (which is a pair of 1-disks) is replaced by a pair of 3-balls. Alternatively, if one prefers to represent the 1-handles using dotted circles, each 2-dimensional 1-handle is replaced by a corresponding dotted circle.
\end{itemize}

\section{A simple method for constructing a PALF} \label{sec:construction}

In this section, we present a simple method for constructing a PALF on a compact 4-manifold admitting a Stein structure. As discussed in Subsection~\ref{sec:preliminaries1}, Theorem~\ref{thm:EliashbergGompf} \cite{MR1044658, MR1668563} states that such a manifold admits a handle decomposition consisting of a single 0-handle, 1-handles, and 2-handles. These 2-handles are attached along Legendrian knots $\widetilde{C_{01}}, \ldots, \widetilde{C_{0m}}$ in the boundary of the union of the 0- and 1-handles, with a framing of $tb(\widetilde{C_{0k}}) - 1$ for each $1 \leq k \leq m$.

We first describe the construction of a PALF for the simplest case, where the handle decomposition consists only of a 0-handle and a single 2-handle. Subsequently, we generalize this method to the case where the handle decomposition includes a 0-handle, 1-handles, and 2-handles.

\subsection{The case of a 2-handlebody with no 1-handles and one 2-handle}\label{subsec:construction1}

We first describe a method for constructing a PALF $P$ in the case where the original Stein surface $\Pi$ has a handle decomposition consisting of a 0-handle and a single 2-handle attached along a Legendrian knot $\widetilde{C_0}$ in $(S^3, \xi_{st})$.

\begin{itemize}
\item[\textbf{Step 1}] Following the procedure described in Subsection~\ref{sec:preliminaries2}, we convert the given Legendrian knot $\widetilde{C_0}$ in $(S^3, \xi_{st})$ into a knot $\widetilde{C'_0}$ in grid position. Next, to ensure that all vertical segments to be lifted over 1-handles have northwest (NW) corners as their upper endpoints, we apply SW stabilizations at NE corners and subsequent horizontal commutations to any vertical segments whose upper endpoints are northeast (NE) corners. Through these operations, we convert the knot $\widetilde{C'_0}$ into a new knot $\widetilde{C''_0}$ in grid position.
\end{itemize}

We assume that the grid number of the knot $\widetilde{C'_0}$ is $N$.
Let $KD''$ denote the Kirby diagram consisting of a single 2-handle attached to $D^4$ along the attaching circle $\overline{C''_0}$ determined by $\widetilde{C''_0}$. Because the knot $\widetilde{C''_0}$ in grid position represents the same Legendrian knot type as $\widetilde{C_0}$, they yield the same Thurston--Bennequin number. Thus, assigning the framing $tb(\widetilde{C''_0}) - 1$ to the attaching circle $\overline{C''_0}$ ensures that the 4-manifold represented by $KD''$ is diffeomorphic to the Stein surface $\Pi$.

As a concrete example, we first convert the Legendrian knot $\widetilde{C_0}$ into a knot $\widetilde{C'_0}$ in grid position (see Figure~\ref{fig:W0205}). Then, by applying SW stabilizations at NE corners and horizontal commutations to the vertical segments whose upper endpoints are NE corners, we convert $\widetilde{C'_0}$ into the new knot $\widetilde{C''_0}$ in grid position (see Figure~\ref{fig:W0206}). 

Let $KD''$ be the corresponding Kirby diagram with the attaching circle $\overline{C''_0}$ on $D^4$ derived from $\widetilde{C''_0}$ (see Figure~\ref{fig:W0303}(a)).
The 4-manifold represented by $KD''$ is diffeomorphic to the Stein surface $\Pi$. The diagram $\widetilde{C''_0}$ has a writhe of $5$ and four left cusps. Its Thurston--Bennequin number is $1$, and the framing of the attaching circle $\overline{C''_0}$ is $0$.

\begin{itemize}
\item[\textbf{Step 2}] Let $SF(0)$ denote the initial surface consisting of the 0-handle $D^2$, which is represented as a gray square. 
The self-intersections of the closed curve $C_0$ must be eliminated by lifting the vertical segments in columns $2$ through $N-1$ using 1-handles. To guide the subsequent operations, ensuring that the knot $\widetilde{C''_0}$ in grid position can be appropriately placed on the constructed surface at the final stage of the PALF construction, we draw a copy of $\widetilde{C''_0}$ on $SF(0)$. This copy is denoted by $B_0$ and serves as a guide (illustrated by the blue guide line in Figure~\ref{fig:W0301}(a)). 

In our diagrams, the grid lines are omitted for clarity, and column numbers are indicated only where necessary.
\end{itemize}

\begin{itemize}
\item[\textbf{Step 3}] 
We perform a sequence of operations, denoted by $\Theta(j)$ for $1 \leq j \leq N-1$, starting from $SF(0)$. The first operation, $\Theta(1)$, proceeds as follows:
\begin{itemize}
\item In the first column of the grid diagram, there is a vertical segment of the guide line $B_0$. The two endpoints of this segment (marked in purple) lie on the boundary of the 0-handle (see Figure~\ref{fig:W0301}(a)).
\item We then attach a 1-handle at this position (the notation for this 1-handle will be explained shortly). Along with this 1-handle, a new red simple closed curve $C_1$ is introduced. By convention, we arrange each $C_j$ such that its rightmost portion lies near the $N$-th column (for instance, the right portion of $C_1$ is placed in column 6 in Figure~\ref{fig:W0301}(b)).

\item By lifting the vertical segment of $B_0$ over the attached 1-handle, the portion of the 0-handle boundary that is not occupied by any horizontal segments of $C_1$ or $B_0$ is pushed to the right. Consequently, the necessary endpoints (marked in purple) for the next column's operation will lie on the newly formed boundary.
\end{itemize}

The general form of the operation $\Theta(j)$ performed in an arbitrary column is illustrated in Figure~\ref{fig:W0302}.
\begin{itemize}
\item (p) depicts a gray region (a part of the surface consisting of the 0-handle, isotoped into a comb-like shape) and a vertical segment of the blue guide line $B_0$ intersected by horizontal segments (there may be any number of horizontal segments, including zero). To attach a 1-handle to the boundary of the surface, the two endpoints of the vertical segment (marked in purple) must lie on the boundary.
\item In  (q), we attach a 1-handle. A newly added red simple closed curve $C_j$ also appears. Here, as shown in  (s), the 1-handle is a part of a Hopf band with a full-twist of $-1$. Consequently, as illustrated in (t), the two bands are linked with a linking number of $-1$. 

By lifting this vertical segment over the 1-handle, the portion of the surface boundary where no horizontal segments of $C_i$ ($0 \leq i \leq j-1$) or $B_0$ exist is pushed to the right.
\end{itemize}

The surface obtained after performing the $j$-th operation $\Theta(j)$ is denoted by $SF(j)$.
\end{itemize}

This operation is repeated inductively for columns 2 through $N-1$. Let $SF$ denote the configuration obtained by removing the guide line $B_0$ from the final diagram $SF(N-1)$. (We will show later that $SF$ determines a PALF whose total space is diffeomorphic to $D^4$.)

\begin{itemize}
\item[\textbf{Step 4}] On the surface $SF$, we place the closed curve $C_0$ at the exact position from which the guide line $B_0$ was removed. We denote the resulting configuration by $P$.
\end{itemize}

\begin{figure}[htbp]
\centering
\begin{tikzpicture}
    \node[anchor=south west, inner sep=0] (image) at (0,0)  {\includegraphics[scale=0.75]{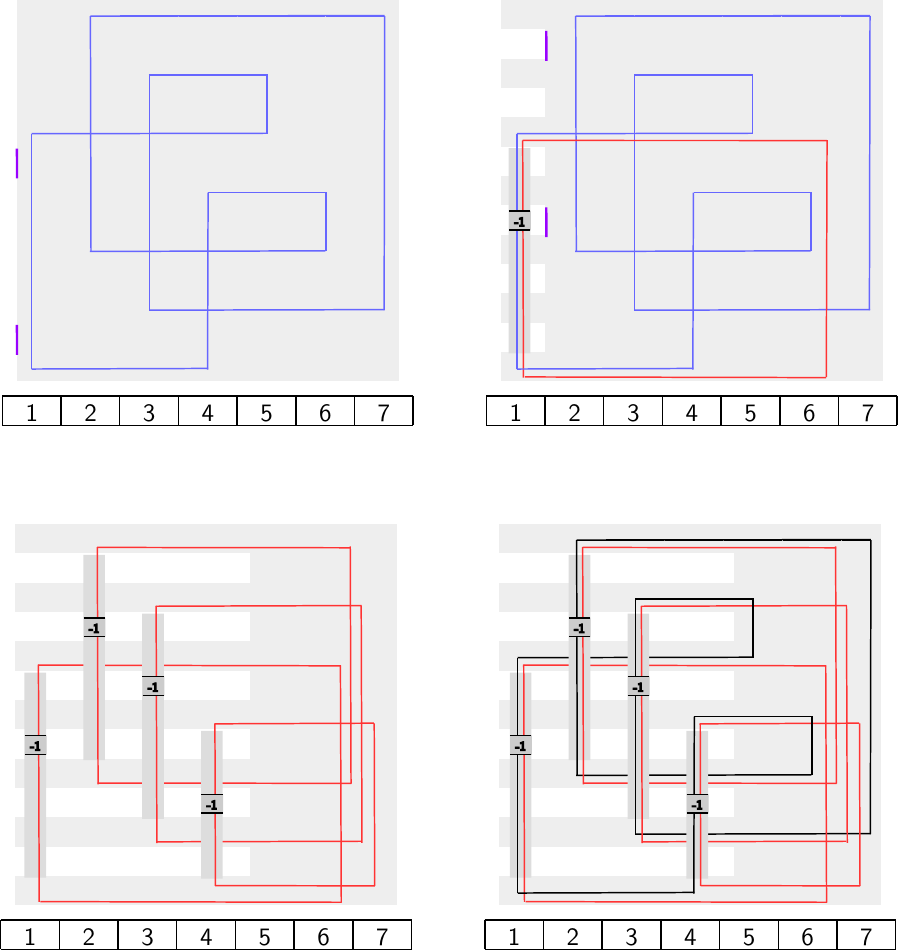}};
    \begin{scope}[x={(image.south east)},y={(image.north west)}]
    
        \node at (0.0, 0.705) {\small $B_0$};
        \node at (0.55, 0.705) {\small $B_0$};
    
        \node at (0.075, 0.27) {\small $C_1$};
        \node at (0.14, 0.40) {\small $C_2$};
        \node at (0.205, 0.335) {\small $C_3$};
        \node at (0.27, 0.212) {\small $C_4$};
        
        \node at (0.55, 0.27) {\small $C_0$};
        \node at (0.615, 0.27) {\small $C_1$};
        \node at (0.68, 0.40) {\small $C_2$};
        \node at (0.745, 0.335) {\small $C_3$};
        \node at (0.81, 0.212) {\small $C_4$};
        
        \node at (0.612, 0.825) {\small $C_1$};
        
        \node at (0.23, 0.51) {(a)};
        \node at (0.77, 0.51) {(b)};
        \node [below] at (0.23, -0.02) {(c)};
        \node [below] at (0.77, -0.02) {(d)};
    \end{scope}
\end{tikzpicture}
\caption{Construction of a PALF. (a) The initial surface $SF(0)$. (b) The surface $SF(1)$ obtained after the first operation. (c) The PALF $SF$ with monodromy factorization $(C_4, C_3, C_2, C_1)$. (d) The PALF $P$ with monodromy factorization $(C_0, C_4, C_3, C_2, C_1)$.}
\label{fig:W0301}
\end{figure}

\begin{figure}[htbp]
\centering  
\begin{tikzpicture}
    \node[anchor=south west, inner sep=0] (image) at (0,0)  {\includegraphics[scale=0.95]{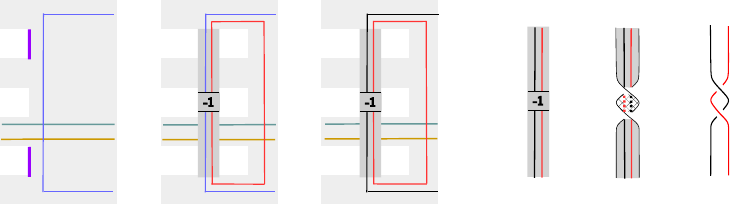} };
    \begin{scope}[x={(image.south east)},y={(image.north west)}]
        \node at (0.80, 0.5) {\huge $=$};
        \node [below] at (0.09, -0.02) {(p)};
        \node [below] at (0.31, -0.02) {(q)};
        \node [below] at (0.52, -0.02) {(r)};
        \node [below] at (0.79, -0.02) {(s)};
        \node [below] at (0.99, -0.02) {(t)};
    \end{scope}
\end{tikzpicture}
\caption{Attaching a 1-handle to lift a vertical segment.}
\label{fig:W0302}
\end{figure}

This completes the construction procedure. The constructed object $P$ determines a PALF because it satisfies the conditions given in Subsection~\ref{sec:preliminaries3}, which we verify as follows:

\begin{itemize}
\item The regular fiber is a surface with boundary, obtained by attaching several 1-handles to the boundary of a 0-handle.

\item By lifting the vertical segments in columns $2$ through $N-1$ over the 1-handles, all self-intersections of the closed curve $C_0$ are eliminated. Consequently, all the closed curves $C_i$ ($0 \leq i \leq N-1$) are simple closed curves.

\item The boundary of the resulting regular fiber is non-empty.

\item Each simple closed curve $C_i$ ($0 \leq i \leq N-1$) passes over at least one 1-handle and intersects the cocore of that 1-handle transversely at exactly one point. Therefore, each $C_i$ represents a nontrivial first homology class in the regular fiber.
\end{itemize}

Similarly, the configuration $SF$ also satisfies the conditions for forming a PALF. 
We assign the monodromy factorization $(C_{N-1}, \ldots, C_2, C_1)$ to the PALF $SF$, and the monodromy factorization $(C_0, C_{N-1}, \ldots, C_2, C_1)$ to the PALF $P$.

We now show that the total space of the PALF $SF$ is diffeomorphic to $D^4$. Recall from Subsection~\ref{sec:preliminaries3} the procedure $\Phi$, which constructs a Kirby diagram representing a 4-dimensional 2-handlebody diffeomorphic to the total space of a given PALF. For each $1 \leq i \leq N-1$, the vanishing cycle $C_i$ of $SF$ and the 1-handle it passes over are depicted in Figure~\ref{fig:W0302}(q) (note that the blue guide line $B_0$ is omitted). Applying the procedure $\Phi$, let $\overline{C_i}$ denote the attaching circle corresponding to the vanishing cycle $C_i$. Because $\overline{C_i}$ intersects the cocore of the 1-handle transversely at exactly one point, the 2-handle attached along $\overline{C_i}$ and the corresponding 1-handle form a 1-handle/2-handle canceling pair. 
Since all 1-handles and 2-handles cancel out in this manner, leaving only the 0-handle, the total space of $SF$ is indeed diffeomorphic to $D^4$.

Returning to our running example, we obtain the PALF $SF$ with the ordered collection of vanishing cycles $(C_1, C_2, C_3, C_4)$ and the PALF $P$ with the collection $(C_0, C_1, C_2, C_3, C_4)$, as shown in Figures~\ref{fig:W0301}(c) and (d), respectively. In this specific example, the regular fiber has genus 1 and three boundary components.\\

We now show that the following lemma holds, which justifies our construction.

\begin{lem}\label{lem:construction1}
The total space of the PALF obtained by the above construction, which corresponds to the ordered collection of simple closed curves on the surface with boundary, is diffeomorphic to the 4-dimensional handlebody consisting of a 0-handle and a single 2-handle attached along $\widetilde{C_0}$ with framing $tb(\widetilde{C_0})-1$.
\end{lem}

\begin{proof}
Throughout this paper, we adopt the following notation for a knot $C$ in grid position: let $tb(C)$ denote the Thurston--Bennequin number, $wr(C)$ the writhe, $fr(C)$ the framing, and $sf(C)$ the surface framing of $C$. Furthermore, let $nw(C)$ (resp. $ne(C)$) denote the number of vertical segments of $C$ that have a northwest (NW) corner (resp. a northeast (NE) corner) as their upper endpoint and cross horizontal segments.

Recall our setup: the Stein surface $\Pi$ is diffeomorphic to the handlebody consisting of a 0-handle and a single 2-handle attached along $\widetilde{C_0}$ with framing $tb(\widetilde{C_0})-1$. 

The knot $\widetilde{C'_0}$ in grid position is obtained by converting the Legendrian knot $\widetilde{C_0}$ using the procedure described in Subsection~\ref{sec:preliminaries2}. By definition, we have:
\[
fr(\widetilde{C'_0}) = tb(\widetilde{C'_0}) - 1 = wr(\widetilde{C'_0}) - nw(\widetilde{C'_0}) - 1.
\]

The knot $\widetilde{C''_0}$ in grid position is obtained by applying SW stabilizations at NE corners to the vertical segments of $\widetilde{C'_0}$ that have NE corners and cross horizontal segments, followed by horizontal commutations.

Because this operation preserves the Legendrian knot type, $\widetilde{C''_0}$ is Legendrian isotopic to $\widetilde{C'_0}$. During this stabilization process, the writhe and the number of NW corners change as follows:
\begin{gather*} 
fr(\widetilde{C''_0}) = tb(\widetilde{C''_0}) - 1 = wr(\widetilde{C''_0}) - nw(\widetilde{C''_0}) - 1, \\
wr(\widetilde{C''_0}) = wr(\widetilde{C'_0}) + ne(\widetilde{C'_0}), \quad nw(\widetilde{C''_0}) = nw(\widetilde{C'_0}) + ne(\widetilde{C'_0}).
\end{gather*}
Substituting these relations, it immediately follows that:
\[
fr(\widetilde{C''_0}) = fr(\widetilde{C'_0}).
\]

Recall from Subsection~\ref{sec:preliminaries3} the procedure $\Phi$ for constructing a Kirby diagram $KD$ representing a 4-dimensional 2-handlebody diffeomorphic to the total space of a given PALF $P$. The regular fiber of the PALF $P$ contains $nw(\widetilde{C''_0})$ 1-handles, each with a full-twist of $-1$. Therefore, for the attaching circle $\overline{C_0}$ in $KD$, the following sequence of equalities holds for its framing:
\[fr(\overline{C_0}) = sf(\widetilde{C''_0}) - 1 = wr(\widetilde{C''_0}) - nw(\widetilde{C''_0}) - 1 = fr(\widetilde{C''_0}) = fr(\widetilde{C'_0}).\]

Let $j$ be an integer such that $1 \leq j \leq N-1$. We introduce the following condition, denoted by $\mathrm{ST}(j)$:

\textbf{Condition $\mathrm{ST}(j)$:}
The Kirby diagram $KD(j)$ contains the attaching circle $\overline{C_0}(j)$ and the attaching circles $\overline{C_i}$ for $1 \leq i \leq j$. Each attaching circle $\overline{C_i}$ ($1 \leq i \leq j$) passes over exactly one 1-handle. In the region to the right of the $j$-th column (i.e., the collection of columns with indices strictly greater than $j$), there are no 1-handles. Furthermore, in this region, the vertical stacking order of the attaching circles from bottom to top is $\overline{C_0}(j), \overline{C_j}, \ldots, \overline{C_1}$. 
The attaching circle $\overline{C_0}(j)$ has a framing equal to $fr(\widetilde{C'_0})$, and at any of its self-intersections, the vertical segment crosses over the horizontal segment.\\

\begin{figure}[htbp]
\centering  
\begin{tikzpicture}
    \node[anchor=south west, inner sep=0] (image) at (0,0)  {\includegraphics[scale=0.8]{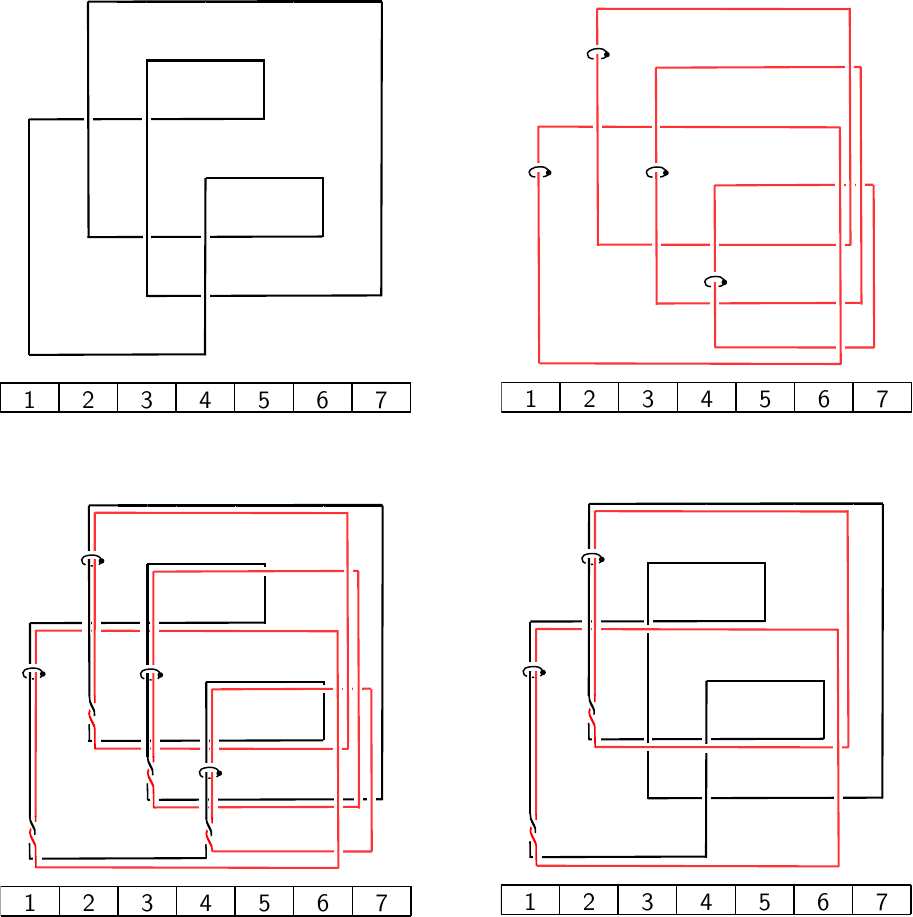}};
    \begin{scope}[x={(image.south east)},y={(image.north west)}]
    
        \node at (0.01, 0.835) {\small $\overline{C_0''}$};
    
        \node at (0.617, 0.835) {\small $\overline{C_1}$};
        \node at (0.683, 0.96) {\small $\overline{C_2}$};
        \node at (0.748, 0.895) {\small $\overline{C_3}$};
        \node at (0.813, 0.77) {\small $\overline{C_4}$};
    
        \node at (0.01, 0.29) {\small $\overline{C_0}$};
        \node at (0.065, 0.29) {\small $\overline{C_1}$};
        \node at (0.133, 0.41) {\small $\overline{C_2}$};
        \node at (0.198, 0.35) {\small $\overline{C_3}$};
        \node at (0.263, 0.225) {\small $\overline{C_4}$};
        
        \node at (0.54, 0.29) {\small $\overline{C_0}(2)$};
        \node at (0.612, 0.29) {\small $\overline{C_1}$};
        \node at (0.678, 0.41) {\small $\overline{C_2}$};
        
        \node [below] at (0.23, 0.53) {(a)};
        \node [below] at (0.77, 0.53) {(b)};
        \node [below] at (0.23, -0.02) {(c)};
        \node [below] at (0.77, -0.02) {(d)};
    \end{scope}
\end{tikzpicture}
\caption{(a) The Kirby diagram $KD''$. The framing of $\overline{C''_0}$ is $0$. (b) The Kirby diagram corresponding to the PALF $SF$. The framing of each $\overline{C_i}$ for $1 \leq i \leq 4$ is $-2$. (c) The Kirby diagram $KD$ corresponding to the PALF $P$. The framing of $\overline{C_0}$ is $0$, and the framing of each $\overline{C_i}$ for $1 \leq i \leq 4$ is $-2$. (d) The Kirby diagram $KD(2)$. The framing of $\overline{C_0}(2)$ is $0$, and the framing of each $\overline{C_i}$ for $i = 1, 2$ is $-2$. The Kirby diagrams $KD''$, $KD$, and $KD(2)$ represent the same diffeomorphism type.}
\label{fig:W0303}
\end{figure}

\begin{figure}[htbp]
\centering  
\begin{tikzpicture}
    \node[anchor=south west, inner sep=0] (image) at (0,0)  {\includegraphics[scale=0.55]{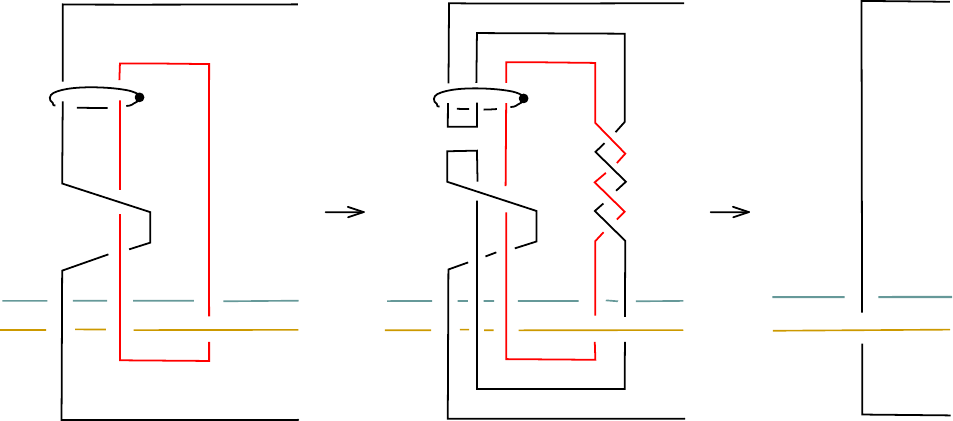}};
    \begin{scope}[x={(image.south east)},y={(image.north west)}]
        \node [below] at (0.19, -0.02) {(a)};
        \node [below] at (0.58, -0.02) {(b)};
        \node [below] at (0.93, -0.02) {(c)};
    \end{scope}
\end{tikzpicture}
\caption{The Kirby move corresponding to the operation $\Psi$.}
\label{fig:W0305}
\end{figure}

We set $KD(N-1) \coloneqq KD$ and $\overline{C_0}(N-1) \coloneqq \overline{C_0}$. By the construction of $KD$ from the PALF $P$, Condition $\mathrm{ST}(N-1)$ holds.

Assume that Condition $\mathrm{ST}(j)$ holds for some $j$ ($2 \leq j \leq N-1$). Under this assumption, we perform a sequence of Kirby moves, denoted by Operation $\Psi$.

By Condition $\mathrm{ST}(j)$, the attaching circle $\overline{C_j}$ can be deformed by isotopy into a neighborhood of the $j$-th column, as illustrated in Figure~\ref{fig:W0305}(a). In this figure, the red curve represents $\overline{C_j}$, and the black curve represents $\overline{C_0}(j)$.
Let the orange segments represent the portions of the attaching circles $\overline{C_i}$ ($1 \leq i \leq j-1$) that pass over the right part of $\overline{C_j}$ (i.e., the portion of the circle not lifted over the 1-handle). There may be any number of such orange segments, including zero. Similarly, let the dark green segments represent the portions of the attaching circle $\overline{C_0}(j)$ itself that pass under the depicted segment of $\overline{C_0}(j)$. There may also be any number of such dark green segments, including zero.

As shown in Figure~\ref{fig:W0305}, Operation $\Psi$ proceeds as follows: we slide the 2-handle $\overline{C_0}(j)$ over $\overline{C_j}$, and subsequently cancel the 1-handle/2-handle pair consisting of the dotted circle and $\overline{C_j}$.

Through this sequence of moves from (a) to (b) and finally to (c), we obtain the new attaching circle, denoted by $\overline{C_0}(j-1)$.

Following Operation $\Psi$, the orange segments (representing parts of $\overline{C_i}$ for $1 \leq i \leq j-1$) now pass over $\overline{C_0}(j-1)$. The dark green segments (representing parts of the original $\overline{C_0}$) pass under $\overline{C_0}(j-1)$. During this process, the framing remains invariant. Thus, the following equality holds:
$$fr(\overline{C_0}(j-1)) = fr(\widetilde{C'_0}).$$
As a result of Operation $\Psi$, Condition $\mathrm{ST}(j-1)$ holds, providing the premise for the operation in the subsequent $(j-1)$-th column.

By repeating this inductive procedure from $j=N-1$ down to $j=2$, we arrive at the Kirby diagram $KD(1)$ which satisfies Condition $\mathrm{ST}(1)$. Applying Operation $\Psi$ one final time to $KD(1)$ yields the Kirby diagram $KD(0)$. This final diagram $KD(0)$ contains no 1-handles and consists solely of the attaching circle $\overline{C_0}(0)$. Furthermore, in $KD(0)$, all vertical segments cross over the horizontal segments (consistent with the grid diagram convention), and the framing is $fr(\overline{C_0}(0)) = fr(\widetilde{C'_0})$. 

The 4-manifold represented by this final Kirby diagram $KD(0)$ is diffeomorphic to the 4-manifold represented by $KD''$. Since $KD''$ represents $\Pi$, we conclude that the total space of the PALF $P$ is diffeomorphic to the original Stein surface $\Pi$.

In our running concrete example, the Kirby diagram $KD$ in Figure~\ref{fig:W0303}(c) corresponds to the PALF $P$. Figure~\ref{fig:W0303}(d) shows the Kirby diagram $KD(2)$, which satisfies Condition $\mathrm{ST}(2)$. The Kirby diagrams $KD''$, $KD$, and $KD(2)$ represent the same diffeomorphism type. Furthermore, the attaching circles $\overline{C''_0}$, $\overline{C_0}$, and $\overline{C_0}(2)$ all have the same framing of $0$.
\end{proof}

\subsection{The general case of a 2-handlebody with 1-handles}\label{subsec:construction2}

We now describe a method for constructing a PALF in the general case, where the handle decomposition of the given 4-manifold consists of a 0-handle, 1-handles, and 2-handles. Step 0 is based on the method introduced by Akbulut and Ozbagci \cite{MR1825664} (see also \cite[Section 10.2]{MR2114165}).

\begin{itemize}
\item[\textbf{Step 0}] We assume that each 1-handle is represented by a pair of 3-balls over the front projection of a Legendrian tangle, and that the framed link diagram is initially in standard form (see Figure~\ref{fig:W0202}). We modify this handle decomposition by applying a single negative full-twist to the strands passing through each dotted circle (see Figure~\ref{fig:W0306}(a)). 

This twisting operation preserves the diffeomorphism type of the underlying 4-manifold.
\end{itemize}

We denote this modified 2-handlebody by $\Pi$, identifying it with the original Stein surface. We assume that $\Pi$ has a handle decomposition consisting of a single 0-handle, $\ell$ 1-handles, and $m$ 2-handles. These 2-handles are attached along Legendrian knots $\widetilde{C_{01}}, \ldots, \widetilde{C_{0m}}$ in the boundary of the union of the 0- and 1-handles, with a framing of $tb(\widetilde{C_{0k}}) - 1$ for each $1 \leq k \leq m$.

\begin{figure}[htbp]
\centering  
\begin{tikzpicture}
    \node[anchor=south west, inner sep=0] (image) at (0,0)  {\includegraphics[scale=0.67]{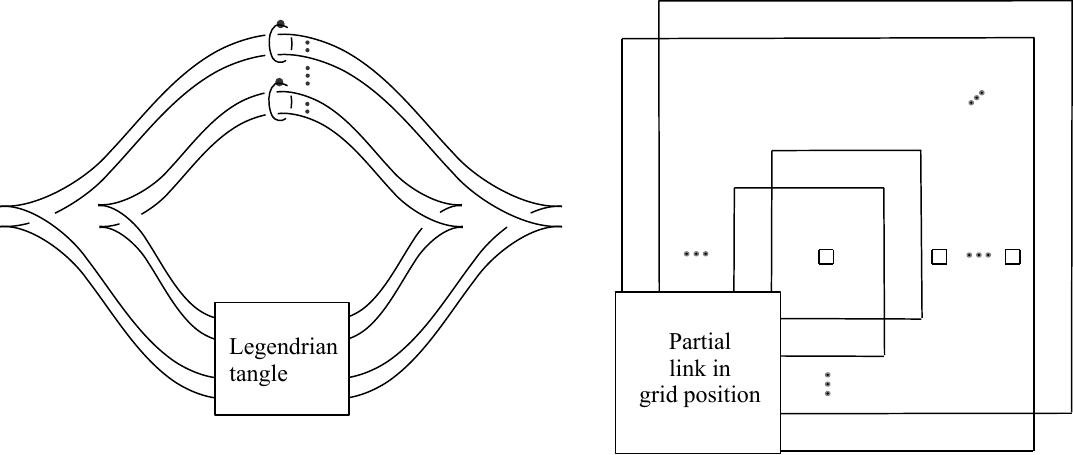}};
    \begin{scope}[x={(image.south east)},y={(image.north west)}]
        \node [below] at (0.27, -0.02) {(a)};
        \node [below] at (0.76, -0.02) {(b)};
    \end{scope}
\end{tikzpicture}
\caption{(a) A front projection of a Legendrian link. (b) The corresponding link in grid position.}
\label{fig:W0306}
\end{figure}

\begin{itemize}
\item[\textbf{Step 1}] Following the procedure described in Subsection~\ref{sec:preliminaries2}, we convert the components $\widetilde{C_{0k}}$ of the Legendrian link into components $\widetilde{C'_{0k}}$ forming a link in grid position, as shown in Figure~\ref{fig:W0306}(b). Here, the $\ell$ small squares represent the $\ell$ holes of $\natural^\ell (S^1 \times D^1)$, and the arcs of the link cannot be moved across these small squares by isotopy. 

Next, to ensure that all vertical segments of the link in grid position to be lifted over 1-handles have northwest (NW) corners as their upper endpoints, we apply SW stabilizations at NE corners and subsequent horizontal commutations to any vertical segments whose upper endpoints are northeast (NE) corners. Through these operations, we convert the link $\widetilde{C'_{0k}}$ in grid position into a new link $\widetilde{C''_{0k}}$ in grid position.
\end{itemize}

Let $KD''$ denote the Kirby diagram consisting of the 2-handles attached to $\natural^\ell (S^1 \times D^3)$ along the attaching circles $\overline{C''_{0k}}$ determined by $\widetilde{C''_{0k}}$. The 4-manifold represented by $KD''$ is diffeomorphic to the Stein surface $\Pi$ (see Figure~\ref{fig:W0307}(a)).  

\begin{itemize}
\item[\textbf{Step 2}] Let $SF(0)$ denote the initial surface with boundary $\natural^\ell (S^1 \times D^1)$, which is represented as a gray square with $\ell$ holes. To guide the subsequent operations, we draw a copy of the component $\widetilde{C''_{0k}}$ in grid position on $SF(0)$ for each $1 \leq k \leq m$. We denote these copies by $B_{0k}$ (see Figure~\ref{fig:W0307}(b)).  
\end{itemize}

\begin{figure}[htbp]
\centering  
\begin{tikzpicture}
    \node[anchor=south west, inner sep=0] (image) at (0,0)  {\includegraphics[scale=0.67]{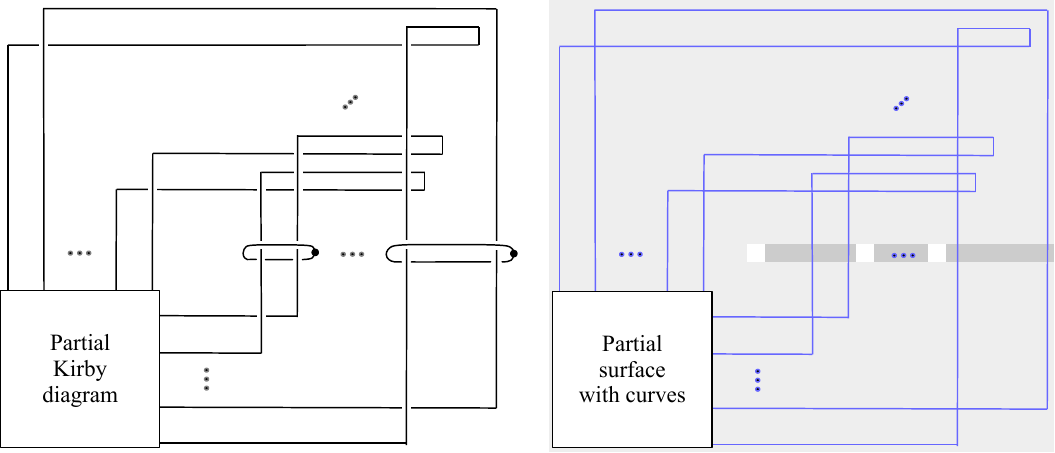}};
    \begin{scope}[x={(image.south east)},y={(image.north west)}]
        \node [below] at (0.27, -0.02) {(a)};
        \node [below] at (0.76, -0.02) {(b)};
    \end{scope}
\end{tikzpicture}
\caption{(a) The Kirby diagram $KD''$. (b) The surface $\natural^\ell (S^1 \times D^1)$ with $\ell$ holes and the blue guide lines $B_{0k}$.}
\label{fig:W0307}
\end{figure}

\begin{itemize}
\item[\textbf{Step 3}] We perform a sequence of operations, denoted by $\Theta(j)$ for $1 \leq j \leq N-1$, starting on $SF(0)$. The first operation, $\Theta(1)$, proceeds exactly as in Subsection~\ref{subsec:construction1}, with the additional constraint that the newly added simple closed curve $C_1$ does not pass across any of the $\ell$ holes of $\natural^\ell (S^1 \times D^1)$. 
The surface obtained after performing the $j$-th operation $\Theta(j)$ is denoted by $SF(j)$. 
\end{itemize}

This operation is repeated inductively for columns 2 through $N-1$. Let $SF$ denote the configuration obtained by removing the guide lines $B_{0k}$ ($1 \leq k \leq m$) from the final diagram $SF(N-1)$. (We will show later that $SF$ determines a PALF whose total space is diffeomorphic to $\natural^\ell (S^1 \times D^3)$.)

\begin{itemize}
\item[\textbf{Step 4}] On the surface $SF$, we place the closed curves $C_{0k}$ at the exact positions from which the guide lines $B_{0k}$ were removed. We denote the resulting configuration by $P$.
\end{itemize}

This completes the construction procedure. The constructed configuration $P$ satisfies the conditions for forming a PALF given in Subsection~\ref{sec:preliminaries3}, in the same way as demonstrated in Subsection~\ref{subsec:construction1}. 

Similarly, the configuration $SF$ also satisfies the conditions for forming a PALF. 
We assign the monodromy factorization $(C_{N-1}, \ldots, C_2, C_1)$ to the PALF $SF$, and the monodromy factorization $(C_{0m}, \ldots, C_{01}, C_{N-1}, \ldots, C_2, C_1)$ to the PALF $P$. 

We now show that the total space of the PALF $SF$ is diffeomorphic to $\natural^\ell (S^1 \times D^3)$.
Recall from Subsection~\ref{sec:preliminaries3} the procedure $\Phi$, which constructs a Kirby diagram representing a 4-dimensional 2-handlebody diffeomorphic to the total space of a given PALF. For each $1 \leq i \leq N-1$, the vanishing cycle $C_i$ of $SF$ and the 1-handle it passes over are depicted in Figure~\ref{fig:W0302}(q)(note that the blue guide line $B_{0k}$ is omitted). Applying the procedure $\Phi$, let $\overline{C_i}$ denote the attaching circle corresponding to the vanishing cycle $C_i$. Because $\overline{C_i}$ intersects the cocore of the 1-handle transversely at exactly one point, the 2-handle attached along $\overline{C_i}$ and the corresponding 1-handle form a 1-handle/2-handle canceling pair. 
Since all such 1-handles and 2-handles cancel out in this manner, leaving only the 0-handle and the initial $\ell$ 1-handles, the total space of $SF$ is indeed diffeomorphic to $\natural^\ell (S^1 \times D^3)$.

\begin{figure}[htbp]
\centering  
\begin{tikzpicture}
    \node[anchor=south west, inner sep=0] (image) at (0,0)  {\includegraphics[scale=0.65]{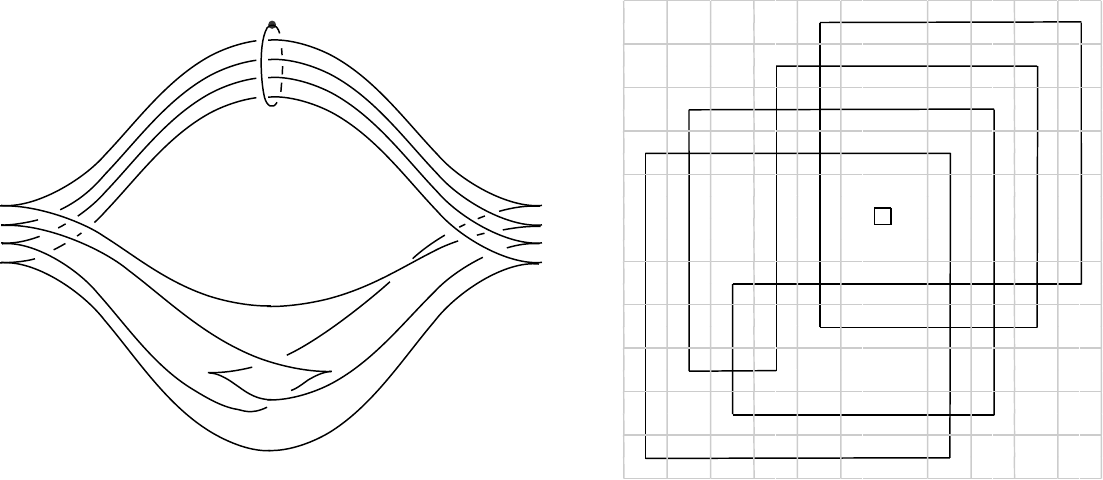}};
    \begin{scope}[x={(image.south east)},y={(image.north west)}]
        \node at (0.04, 0.67) {\small $\widetilde{C_{02}}$};
        \node at (0.04, 0.35) {\small $\widetilde{C_{01}}$};
        \node at (0.56, 0.5) {\small $\widetilde{C'_{01}}$};
        \node at (0.6, 0.76) {\small $\widetilde{C'_{02}}$};
        \node [below] at (0.27, -0.02) {(a)};
        \node [below] at (0.76, -0.02) {(b)};
    \end{scope}
\end{tikzpicture}
\caption{(a) The Legendrian link diagram for a concrete example with a 1-handle. (b) The corresponding link in grid position.}
\label{fig:W0308}
\end{figure}

\begin{figure}[htbp]
\centering  
\begin{tikzpicture}
    \node[anchor=south west, inner sep=0] (image) at (0,0)  {\includegraphics[scale=0.8]{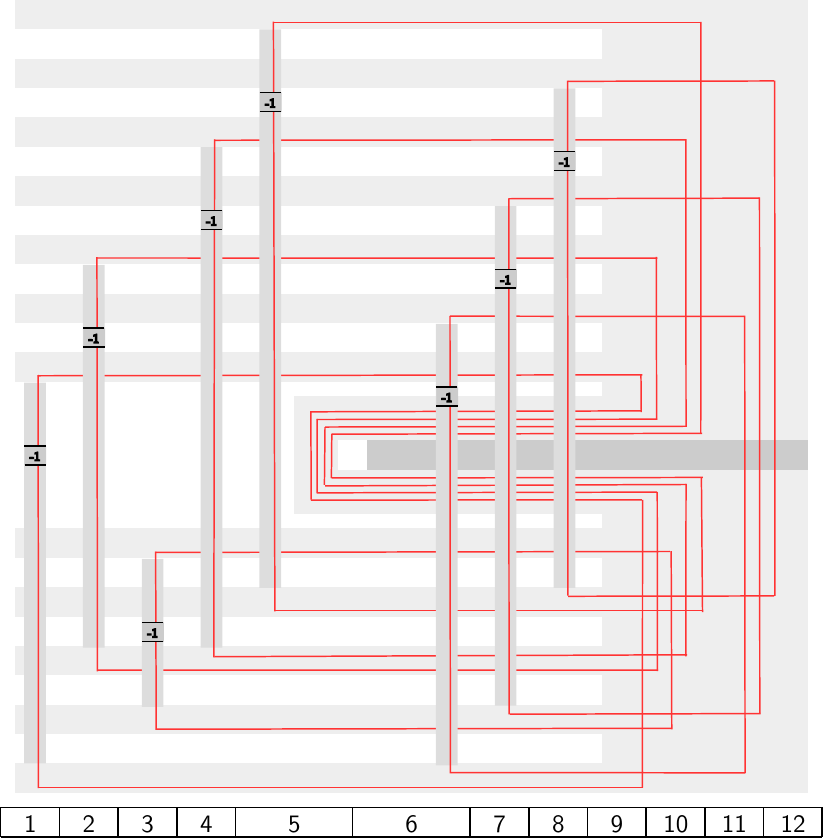}};
    \begin{scope}[x={(image.south east)},y={(image.north west)}]
        \node at (0.085, 0.52) {\small $C_1$};
        \node at (0.155, 0.665) {\small $C_2$};
        \node at (0.225, 0.315) {\small $C_3$};
        \node at (0.295, 0.806) {\small $C_4$};
        \node at (0.368, 0.94) {\small $C_5$};
        \node at (0.58, 0.59) {\small $C_6$};
        \node at (0.65, 0.735) {\small $C_7$};
        \node at (0.723, 0.875) {\small $C_8$};
    \end{scope}
\end{tikzpicture}
\caption{The PALF $SF$ with monodromy factorization $(C_8, C_7, C_6, C_5, C_4, C_3, C_2, C_1)$.}
\label{fig:W0309}
\end{figure}

\begin{figure}[htbp]
\centering  
\begin{tikzpicture}
    \node[anchor=south west, inner sep=0] (image) at (0,0)  {\includegraphics[scale=0.8]{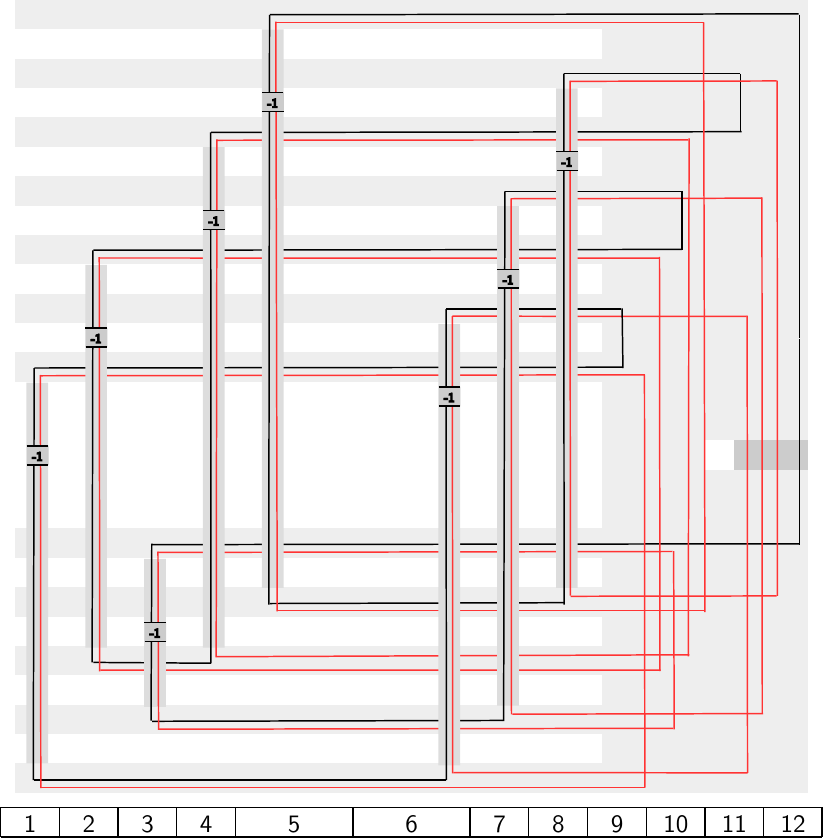}};
    \begin{scope}[x={(image.south east)},y={(image.north west)}]
        \node at (0.012, 0.52) {\small $C_{01}$};
        \node at (0.082, 0.665) {\small $C_{02}$};
        \node at (0.085, 0.52) {\small $C_1$};
        \node at (0.155, 0.665) {\small $C_2$};
        \node at (0.225, 0.315) {\small $C_3$};
        \node at (0.295, 0.806) {\small $C_4$};
        \node at (0.368, 0.94) {\small $C_5$};
        \node at (0.58, 0.59) {\small $C_6$};
        \node at (0.65, 0.735) {\small $C_7$};
        \node at (0.723, 0.875) {\small $C_8$};
    \end{scope}
\end{tikzpicture}
\caption{The PALF $P$ with monodromy factorization $(C_{02}, C_{01}, C_8, C_7, C_6, C_5, C_4, C_3, C_2, C_1)$.}
\label{fig:W0310}
\end{figure}

We now present a concrete example. We consider the original Stein surface $\Pi$ consisting of a 0-handle, one 1-handle, and two 2-handles. These 2-handles are attached along the components $\widetilde{C_{01}}$ and $\widetilde{C_{02}}$ of a Legendrian link, corresponding to the configuration in Step 0, as shown in Figure~\ref{fig:W0308}(a). (The framings of $\widetilde{C_{01}}$ and $\widetilde{C_{02}}$ are $-2$ and $0$, respectively.) 

Following the procedure described in Subsection~\ref{sec:preliminaries2}, we convert the Legendrian link components $\widetilde{C_{01}}$ and $\widetilde{C_{02}}$ into link components $\widetilde{C'_{01}}$ and $\widetilde{C'_{02}}$ in grid position, as shown in Figure~\ref{fig:W0308}(b). Here, the single small square represents the hole of $S^1 \times D^1$, and the arcs of the link cannot be moved across this small square by isotopy. This depicts a simplified version of a link diagram associated with a cork.

Applying our procedure, we construct the PALF $SF$ with the ordered collection of vanishing cycles $(C_8, \ldots, C_1)$ (see Figure~\ref{fig:W0309}). 
After constructing the PALF $SF$, we can further perform the following modification: via isotopy, we move the hole of $S^1 \times D^1$ and the rightmost portions of four vanishing cycles near column 12. On the surface $SF$, we then place the closed curves $C_{01}$ and $C_{02}$ at the exact positions from which the guide lines $B_{01}$ and $B_{02}$ were removed, thereby obtaining the PALF $P$ (see Figure~\ref{fig:W0310}). 

In this specific example, the regular fiber has genus 3 and four boundary components. (In the Kirby diagram associated with the PALF $P$, the framings of the attaching circles $\overline{C_{01}}$ and $\overline{C_{02}}$ are $-2$ and $0$, respectively.)

We now state and prove the main theorem of this section, which guarantees the correctness of our general construction.

\begin{thm}\label{thm:construction2}
The total space of the PALF obtained by the above construction, which corresponds to the ordered collection of simple closed curves on the surface with boundary, is diffeomorphic to the handlebody consisting of a 0-handle, $\ell$ 1-handles, and $m$ 2-handles attached along the Legendrian link components $\widetilde{C_{0k}}$ ($1 \leq k \leq m$) with framings $tb(\widetilde{C_{0k}})-1$.
\end{thm}

\begin{proof}
To prove this theorem, it suffices to consider the two main differences between this general setting and the setting of Lemma~\ref{lem:construction1}. The notation used below is identical to that in Lemma~\ref{lem:construction1}, except that the single component $C_0$ is replaced by the link components $C_{0k}$ for $1 \leq k \leq m$.

\begin{itemize}
\item[(1)] \textbf{The handle decomposition contains one or more 1-handles.} \\
In the link $\widetilde{C'_{0k}}$ in grid position shown in Figure~\ref{fig:W0306}(b), the arcs of the link cannot be moved by isotopy across the $\ell$ small squares, which represent the $\ell$ holes of $\natural^\ell (S^1 \times D^1)$. Similarly, the vanishing cycles $C_j$ ($1 \leq j$) generated during the PALF construction cannot be moved across these holes by isotopy. Nevertheless, the PALF can still be successfully constructed. Furthermore, after generating the PALF $SF$, we can perform the following modification: via isotopy, we move the $\ell$ holes of $\natural^\ell (S^1 \times D^1)$ and the rightmost portions of some vanishing cycles near column $N$.

\item[(2)] \textbf{The handle decomposition contains multiple 2-handles.} \\
First, we verify that the equality $fr(\overline{C_{0k}}) = fr(\widetilde{C'_{0k}})$ holds for each $k$. By definition, we have:
\begin{align*}
fr(\widetilde{C'_{0k}}) &= tb(\widetilde{C'_{0k}}) - 1 = wr(\widetilde{C'_{0k}}) - nw(\widetilde{C'_{0k}}) - 1, \\
fr(\widetilde{C''_{0k}}) &= tb(\widetilde{C''_{0k}}) - 1 = wr(\widetilde{C''_{0k}}) - nw(\widetilde{C''_{0k}}) - 1.
\end{align*}
During the stabilization process, the writhe and the number of NW corners change as follows: $wr(\widetilde{C''_{0k}}) = wr(\widetilde{C'_{0k}}) + ne(\widetilde{C'_{0k}})$ and $nw(\widetilde{C''_{0k}}) = nw(\widetilde{C'_{0k}}) + ne(\widetilde{C'_{0k}})$. Substituting these relations, it follows that:
\[
fr(\widetilde{C''_{0k}}) = fr(\widetilde{C'_{0k}}).
\]
Furthermore, for the attaching circles in the Kirby diagram, we have:
\begin{align*}
fr(\overline{C_{0k}}) &= sf(\widetilde{C''_{0k}}) - 1 = wr(\widetilde{C''_{0k}}) - nw(\widetilde{C''_{0k}}) - 1 \\
&= fr(\widetilde{C''_{0k}}) = fr(\widetilde{C'_{0k}}).
\end{align*}

As in the proof of Lemma~\ref{lem:construction1}, we perform the sequence of Kirby moves illustrated in Figure~\ref{fig:W0305}. In this figure, assume that the 1-handle is located in column $j$. The red curve represents $\overline{C_j}$, and the black curve represents $\overline{C_{0k}}(j)$. Let the orange segments represent the portions of the attaching circles $\overline{C_i}$ ($1 \leq i \leq j-1$) that pass over the right part of $\overline{C_j}$ (i.e., the portion of the circle not lifted over the 1-handle). There may be any number of such orange segments, including zero. Similarly, let the dark green segments represent portions of the components $\overline{C_{01}}(j), \ldots, \overline{C_{0m}}(j)$, including $\overline{C_{0k}}(j)$. There may also be any number of such dark green segments, including zero.

We slide the 2-handle $\overline{C_{0k}}(j)$ over $\overline{C_j}$, and subsequently cancel the 1-handle/2-handle pair consisting of the dotted circle and $\overline{C_j}$. 
By repeating this operation down to column $1$, we obtain a Kirby diagram $KD(0)$ whose attaching circles consist solely of $\overline{C_{0k}}(0)$ ($1 \leq k \leq m$). This diagram $KD(0)$ corresponds to a grid diagram where all vertical segments cross over the horizontal segments, with the framings given by $fr(\overline{C_{0k}}(0)) = fr(\widetilde{C'_{0k}})$.
The 4-manifold represented by this final Kirby diagram $KD(0)$ is diffeomorphic to the 4-manifold represented by $KD''$.
\end{itemize}

From the above arguments, we conclude that the total space of the PALF $P$ is diffeomorphic to the original Stein surface $\Pi$.
\end{proof}

\section{Minimal genus of a regular fiber of a PALF on a knot trace}\label{sec:genus}

In this section, by a knot (or link) trace, we mean a compact 4-manifold obtained by attaching 2-handles to a 0-handle $D^4$ along a framed knot (or link) in $S^3$, that is, a 2-handlebody without any 1-handles. 
Consider a knot trace that admits a Stein surface, whose attaching circle is a knot $K$ framed by $\overline{tb}(K) - 1$, where $\overline{tb}(K)$ is the maximal Thurston--Bennequin number. The minimal genus of a regular fiber of any PALF on such a knot trace defines an invariant of the knot $K$. We define:
\[
g(K) \coloneqq \min \left\{
\begin{array}{l}
\text{genus of a regular fiber of a PALF on the knot trace} \\
\text{that admits a Stein surface,} \\
\text{whose attaching circle is the knot } K \text{ framed by } \overline{tb}(K) - 1
\end{array}
\right\}
\]

This definition naturally extends to the case of a link trace.  
Let $\overline{K}$ denote the mirror of a knot (or link) $K$.  
In general, $g(K)$ and $g(\overline{K})$ are not equal.

The minimal size of a grid diagram representing a knot (or link) $K$ is called the grid number of $K$. Using this grid number $N$, we obtain the following theorem for general knots and links:

\begin{thm}\label{thm:genus}
Let $K$ be a knot (or link) with grid number $N$. Then the minimal genus $g(K)$ of a regular fiber of a PALF on the knot (or link) trace of $K$ satisfies $g(K) \leq (N - 1)/2$.
\end{thm}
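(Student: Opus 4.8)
The plan is to invoke Theorem~\ref{thm:construction2} directly, after first arranging that the grid diagram realizing the grid number of $K$ is also a Legendrian front in standard form with the right Thurston--Bennequin number. First I would start from a grid diagram $G$ of $K$ with grid number $N$. Converting $G$ back to a Legendrian front (reverse of the procedure in Subsection~\ref{sec:preliminaries2}: rotate $45^\circ$ counterclockwise, smooth the NW corners to left cusps and the SE corners to right cusps, and let the other corners become smooth strands) yields a Legendrian representative $\widetilde{C}_0$ of $K$ whose front has $\lambda$ left cusps, where $\lambda$ is the number of NW corners of $G$. The knot trace on $\widetilde{C}_0$ framed by $\operatorname{tb}(\widetilde{C}_0)-1$ admits a Stein structure by Theorem~\ref{thm:EliashbergGompf}. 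For this $G$ to compute $g(K)$ we need $\operatorname{tb}(\widetilde{C}_0)=\overline{\operatorname{tb}}(K)$; if not, one may first apply the standard fact that every knot has a grid diagram of minimal grid number which simultaneously maximizes $tb$ (stabilizations in grid form never decrease grid number by more than a controlled amount) — more cleanly, since $g(K)$ is defined as a minimum over all PALF's on the maximal-$tb$ knot trace, and the construction applies to any grid diagram, it suffices to exhibit one PALF with fiber genus $\le (N-1)/2$; the existence of a grid diagram of size $N$ realizing $\overline{\operatorname{tb}}(K)$ is standard (Ng, Dynnikov), so I would cite that.

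Next I would run the construction of Subsection~\ref{subsec:construction1} (the no--$1$-handle case) on $SF(0)=(\Sigma^{(0)},C_0^{(0)})$, where $C_0^{(0)}$ is $K$ in grid position inside the $N\times N$ grid. The operations $\Theta(1),\dots,\Theta(N-1)$ attach one $1$-handle to the surface for each of the first $N-1$ columns, except that (as noted in the general discussion) a column whose vertical segment has an NE corner lying to the right with no horizontal segment crossing it receives no $1$-handle. By Theorem~\ref{thm:construction2} (or Lemma~\ref{lem:construction1}) the resulting PALF $P$ has total space diffeomorphic to the knot trace. The regular fiber $\Sigma$ is obtained from a $0$-handle disk by attaching these $1$-handles, so $\Sigma \cong \Sigma^{b}_{g}$ with $2g+b-1$ equal to the number of $1$-handles attached, which is at most $N-1$. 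Hence $2g+b-1 \le N-1$, and since $b \ge 1$ we get $2g \le N-1-b+1-1 = N-1-b \le N-2$, i.e. $g \le (N-2)/2 \le (N-1)/2$. (Even the crude bound $2g \le 2g+b-1 \le N-1$ gives $g \le (N-1)/2$ immediately, which is all that is claimed.)

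The main obstacle I expect is the bookkeeping for the count of $1$-handles: one must verify that the construction really attaches \emph{at most} $N-1$ handles and not more, and in particular that no extra handles are introduced at the modification step that moves holes and curve-portions into the $N$-th column (that step is an isotopy of the fiber, so it does not change the genus — this should be remarked). A secondary point requiring care is the link case: for an $r$-component link the fiber genus bound is the same because the grid still has $N$ columns and the same column-by-column procedure applies, producing a connected fiber (the construction builds everything on one $0$-handle), so $2g+b-1\le N-1$ still holds with $b\ge r\ge 1$. I would also note explicitly that when $K$ is a knot and $N$ is odd the sharper bound $g\le(N-1)/2$ with equality forced only if $b=1$ is visible from the argument, but since the theorem only asserts the inequality $g(K)\le(N-1)/2$, the coarse estimate suffices and no optimization is needed.
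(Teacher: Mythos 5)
Your proof is correct and follows essentially the same route as the paper: run the column-by-column construction on a size-$N$ grid diagram, observe that the regular fiber is built from one $0$-handle by attaching at most $N-1$ one-handles, and conclude from $2g+b-1\le N-1$ together with $b\ge 1$ that $g\le (N-1)/2$. Two minor remarks: your intermediate line ``$2g\le N-1-b$'' should read $2g\le N-b$ (your fallback estimate $2g\le 2g+b-1\le N-1$ is the one that matters and is correct), and your concern about whether a grid diagram of minimal size realizes $\overline{tb}(K)$ --- needed so that the construction lands on the knot trace with framing $\overline{tb}(K)-1$ rather than some other framing --- is a genuine subtlety that the paper's own proof passes over in silence.
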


\begin{proof}
Let $\Sigma^b_g$ be the regular fiber of genus $g$ with $b$ boundary components obtained from our construction.  
This surface is formed by attaching at most $N - 1$ $1$-handles to a $0$-handle.  
Since the number of boundary components $b$ is at least $1$, applying the Euler characteristic formula
\[
2 - 2g - b = 1 - (\text{number of } 1\text{-handles})
\]
implies that the genus $g$ satisfies $g \leq (N - 1)/2$.

\end{proof}

\begin{rem}\label{rem:torus}
Focusing on the flexibility of the regular fiber, we present a combinatorial extension of this construction method in \cite{Tanaka2026combinatorial}. This yields a PALF whose total space is diffeomorphic to the original Stein surface, but with a different regular fiber. 
Applications of the construction method presented in this paper are given in \cite{Tanaka2026Alternating}. 
In particular, it is shown that applying this method to knot traces whose attaching circles are positive torus knots yields PALFs with regular fibers of genus $1$.
\end{rem}


\bibliographystyle{amsalpha}
\bibliography{ALF_1}

\end{document}